\title{Some remarks on sum of Euler's totient function}
\author{
  Es-said En-naoui\\
 \href{mailto:essaidennaoui1@gmail.com}{\tt essaidennaoui1@gmail.com}\\}
\newtheorem{theorem}{Theorem}
\newtheorem{corollary}[theorem]{Corollary}
\newtheorem{proposition}[theorem]{Proposition}
\newtheorem{remark}[theorem]{Remark}
\begin{document}
\maketitle

\begin{abstract}
Euler's totient function counts the positive integers up to a given integer $n$ that are relatively prime to $n$. The aim of this article is to give a result about the sum : 
$
\sum \limits_{\underset{p|k}{k=1}}^n \phi(k)
$ , for every prime number $p$ .
\end{abstract}
{\bf Keywords :} Euler's totient , Dirichlet product, additive function , multiplicative function , Dirichlet series.

\section{Introduction}
Euler totient function $\phi$ is the function defined on the positive natural numbers $\mathbb{N}^*$ in the following way: if $n\in \mathbb{N}^*$, then we have\; :
$$
\phi (n)=\big\{x\in \mathbb{N}^*:1\leq x\leq n\;,\; \gcd (x,n)=1\big\}.
$$
 Clearly $\phi (p)=p-1$, for any prime number 
$p$ and, more generally, if $\alpha \in \mathbb{N}^*$, then $\phi (p^{\alpha})=p^{\alpha }- p^{\alpha -1}$. This follows from the fact that the only numbers which are not coprime with $p^{\alpha}$ are multiples of $p$ and there are $p^{\alpha -1}$ such multiples $x$ with $1\leq x\leq p^{\alpha}$.\\

It is well-known that $\phi$ is multiplicative, i.e., if $m$ and $n$ are 
coprime, then $\phi (mn) = \phi (m)\phi (n)$. If $n\geq 3$ and the prime 
decomposition of $n$ is 
$$
n = p_1^{\alpha _1}\ldots  p_s^{\alpha _s},
$$
then from what we have seen 
$$
\phi (n) = \prod _{i=1}^s(p_i^{\alpha _i}-p^{\alpha _i-1}) = 
n\prod _{i=1}^s(1-\frac{1}{p_i}).
$$
.

The following property is simple but very useful in this paper.

\begin{proposition}\label{prop1} If $p,m\in \mathbb{N}^*$, with $p$ prime, and $n=pm$, then $\phi (n)=(p-1)\phi (m)$, if $\gcd (p,m)=1$, and $\phi (n)=p\phi (m)$, if $\gcd (p,m)\neq 1$.
\end{proposition}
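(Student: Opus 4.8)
The plan is to split into the two cases dictated by the value of $\gcd(p,m)$. Since $p$ is prime, its only positive divisors are $1$ and $p$, so $\gcd(p,m)$ can only equal $1$ or $p$; these correspond exactly to the two alternatives in the statement. In the coprime case $\gcd(p,m)=1$, I would simply invoke the multiplicativity of $\phi$ together with the elementary evaluation $\phi(p)=p-1$ recalled in the introduction, obtaining
$$
\phi(n)=\phi(pm)=\phi(p)\,\phi(m)=(p-1)\,\phi(m).
$$

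For the remaining case $\gcd(p,m)\neq 1$, the condition forces $p\mid m$, since $\gcd(p,m)$ divides the prime $p$ and is not $1$. Here I would exploit the product formula $\phi(n)=n\prod_{q\mid n}(1-\tfrac{1}{q})$, where the product ranges over the distinct primes dividing $n$. The key observation is that, because $p$ already divides $m$, the set of distinct prime divisors of $n=pm$ coincides with that of $m$: multiplying by $p$ raises the exponent of $p$ but introduces no new prime. Consequently $\prod_{q\mid n}(1-\tfrac1q)=\prod_{q\mid m}(1-\tfrac1q)=\phi(m)/m$, and therefore
$$
\phi(n)=n\prod_{q\mid n}\Bigl(1-\tfrac1q\Bigr)=pm\cdot\frac{\phi(m)}{m}=p\,\phi(m).
$$

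Alternatively, in the second case I could avoid the product formula by writing $m=p^{\alpha}t$ with $\alpha\geq 1$ and $\gcd(p,t)=1$, so that $n=p^{\alpha+1}t$; applying multiplicativity and the formula $\phi(p^{\beta})=p^{\beta}-p^{\beta-1}$ to both $m$ and $n$ and taking the ratio yields the factor $p$ directly. The argument involves no genuine obstacle; the only point demanding care is the second case, where one must recognize that the hypothesis $\gcd(p,m)\neq 1$ is equivalent to $p\mid m$ (using the primality of $p$) and that this means $p$ contributes no additional $(1-\tfrac1p)$ factor to the totient product.
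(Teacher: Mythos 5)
Your proposal is correct. The first case is word-for-word the paper's argument: multiplicativity plus $\phi(p)=p-1$. For the second case you take a genuinely different primary route: instead of the product formula, the paper writes $m=p^{\alpha}m'$ with $\alpha\geq 1$ and $\gcd(p,m')=1$, computes $\phi(n)=\phi(p^{\alpha+1})\phi(m')=p^{\alpha}(p-1)\phi(m')$ and $\phi(m)=p^{\alpha-1}(p-1)\phi(m')$, and takes the ratio --- which is exactly the ``alternative'' you sketch at the end. Your main argument via $\phi(n)=n\prod_{q\mid n}(1-\tfrac1q)$ and the observation that $n=pm$ has the same set of prime divisors as $m$ is shorter and arguably more conceptual (it isolates the real reason the factor is $p$ rather than $p-1$: no new prime appears), at the cost of invoking the full product formula, which the paper's introduction only states for $n\geq 3$ with a given prime decomposition; since $\gcd(p,m)\neq 1$ forces $m\geq p\geq 2$ and $n\geq 4$, this is harmless, but the paper's factorization argument needs only multiplicativity and $\phi(p^{\beta})=p^{\beta}-p^{\beta-1}$, which is marginally more elementary. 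Either version is a complete proof.
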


\begin{proof} If $\gcd (p,m)=1$, then we have
$$
\phi (n) =\phi (p)\phi (m) = (p-1)\phi (m).
$$
Now suppose that $\gcd (p,m)\neq 1$. We may write $m=p^{\alpha}m'$, with 
$\alpha \geq 1$ and $\gcd (p,m')=1$. Thus 
$$
\phi (n) = \phi (p^{\alpha +1})\phi (m') = p^{\alpha}(p-1)\phi (m').
$$
However 
$$
\phi (m) = p^{\alpha -1}(p-1)\phi (m')
$$
and so $$\phi (n)= p\phi (m)$$.
\end{proof}

\section{Calcul the sum  $\sum \limits_{\underset{p|k}{k=1}}^n \phi(k)$ }

Let $x$ be a real number , then $\lfloor x \rfloor$ is defined to be the unique integer $n$ satisfying $ n\leq x <n+1$.This integer  $\lfloor x \rfloor$  is called the floor of $x$. Then we have rhis results :
\begin{corollary}\label{cor1}
 Let $m$ be an integer, and let $x$ be a real number. Then,\;$ m \leq x$ holds if and only if \;$m \leq \lfloor x \rfloor$ holds.
\end{corollary}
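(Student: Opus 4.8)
The plan is to prove the two implications of the biconditional separately, relying only on the defining property of the floor stated just above: $\lfloor x \rfloor$ is the unique integer satisfying $\lfloor x \rfloor \le x < \lfloor x \rfloor + 1$. No deeper machinery is needed. The only genuinely load-bearing hypothesis is that $m$ is an integer, which is what allows me to pass between a strict and a non-strict inequality at the key moment.

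First I would dispatch the easy direction, $m \le \lfloor x \rfloor \implies m \le x$. Assuming $m \le \lfloor x \rfloor$, I combine this with the left half of the defining inequality, $\lfloor x \rfloor \le x$, to obtain $m \le \lfloor x \rfloor \le x$, hence $m \le x$. Note that this implication does not even use that $m$ is an integer; it is pure transitivity of $\le$.

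For the converse, $m \le x \implies m \le \lfloor x \rfloor$, I would argue by contradiction. Suppose instead that $m > \lfloor x \rfloor$. Since $m$ and $\lfloor x \rfloor$ are both integers, this strict inequality forces $m \ge \lfloor x \rfloor + 1$. Combining with the right half of the defining inequality, $x < \lfloor x \rfloor + 1$, yields $m \ge \lfloor x \rfloor + 1 > x$, so $m > x$, contradicting the hypothesis $m \le x$. Hence $m \le \lfloor x \rfloor$, which completes the biconditional.

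The one step requiring any care is the integrality move in the converse, namely passing from $m > \lfloor x \rfloor$ to $m \ge \lfloor x \rfloor + 1$. This is legitimate precisely because both quantities are integers, and it is exactly where the assumption that $m$ is an integer is consumed; for a general real $m$ the statement is false (take $m = x$ non-integral). Everything else follows immediately from the definition of the floor, so I do not anticipate any substantive obstacle beyond flagging this point clearly.
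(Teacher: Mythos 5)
Your proof is correct and follows essentially the same route as the paper: both directions are deduced directly from the defining inequality $\lfloor x\rfloor \le x < \lfloor x\rfloor + 1$. In fact yours is slightly more complete, since the paper merely asserts the direction $m \le x \Rightarrow m \le \lfloor x\rfloor$ without justification, whereas you supply the key integrality step ($m > \lfloor x\rfloor$ with both integers forces $m \ge \lfloor x\rfloor + 1 > x$) that makes that direction rigorous.
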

\begin{proof}
Recall that $\lfloor x \rfloor$ is the unique integer $n$ satisfying $n \leq x < n + 1$. \\
Thus, $\lfloor x \rfloor$ is an integer satisfying $\lfloor x \rfloor \leq x < \lfloor x \rfloor+1$ .\\
If $m \leq x$ holds, then $m \leq \lfloor x \rfloor$ holds . Conversely, if $m \leq \lfloor x \rfloor$ holds, then $m \leq x$ (because $m \leq \lfloor x \rfloor \leq x$). Combining these two implications, we conclude that $m \leq x$ holds if and only if $m \leq \lfloor x \rfloor$ holds. Corollary 
$ \ref{cor1} $ is thus proven.
\end{proof}

\begin{proposition}\label{prop2}
Let $x\in\mathbb{R}$ and $m\in\mathbb{N}^*$.\;Then , 
$\big\lfloor  \frac{\lfloor x \rfloor}{m}   \big\rfloor
=\big\lfloor \frac{x}{m}\big\rfloor $. 
\end{proposition}
\begin{proof}
By the definition of $\lfloor x \rfloor $,\; that $\big\lfloor \frac{x}{m} \big\rfloor$ is the unique integer $n$ satisfying $n \leq \frac{x}{m} < n + 1$.Thus, $\big\lfloor \frac{x}{m} \big\rfloor$ is an integer satisfying $\big\lfloor \frac{x}{m} \big\rfloor \leq \frac{x}{m} < \big\lfloor \frac{x}{m} \big\rfloor+1$ .\\
But $m\in\mathbb{N}^*$ and thus $m\geq 1 >0 $ . Hence, we can multiply the inequality $\big\lfloor \frac{x}{m} \big\rfloor \leq \frac{x}{m}$ by m. We thus obtain $m\big\lfloor \frac{x}{m} \big\rfloor \leq x$.
But  the Corollary $ \ref{cor1} $ (applied to $m\big\lfloor \frac{x}{m}\big\rfloor$ instead of $m$) shows that $m\big\lfloor \frac{x}{m} \big\rfloor \leq x$ holds if and only $m\big\lfloor \frac{x}{m} \big\rfloor \leq \lfloor x\rfloor$ holds (since $m\big\lfloor \frac{x}{m}\big\rfloor$ is an integer). Thus,
$m\big\lfloor \frac{x}{m} \big\rfloor \leq \lfloor x\rfloor$ holds (since $m\big\lfloor \frac{x}{m} \big\rfloor \leq  x$ holds ).Dividing this inequality by $m$, we
obtain $\big\lfloor \frac{x}{m} \big\rfloor \leq \frac{\lfloor x\rfloor}{m}$. We can now divide the inequality $\lfloor x\rfloor\leq x$ by $m$ (since $m > 0$).We thus obtain
$\frac{\lfloor x\rfloor}{m} \leq \frac{x}{m}$.\\
Hence , $\frac{\lfloor x \rfloor}{m}\leq \frac{x}{m}<
\big\lfloor\frac{x}{m} \big\rfloor+1  $.So we have 
$\big\lfloor\frac{x}{m}\big\rfloor\leq \frac{\lfloor x \rfloor}{m}<\big\lfloor\frac{x}{m}\big\rfloor+1$.In other words,$\big\lfloor\frac{x}{m}\big\rfloor$ is an integer $n$ satisfying 
$n \leq \frac{\lfloor x\rfloor}{m}<n+1$.But 
$\big\lfloor \frac{\lfloor x\rfloor}{m} \big\rfloor $ is the unique integer $n$ satisfying $n \leq \frac{\lfloor x\rfloor} {m}<n+1$ (because this is
how $\big\lfloor \frac{\lfloor x\rfloor}{m} \big\rfloor $ is defined).Hence, if $n$ is any integer satisfying $n \leq \frac{\lfloor x\rfloor}{m}<n+1$, then $n=\big\lfloor \frac{\lfloor x\rfloor}{m} \big\rfloor$.We can apply this to $n =\big\lfloor\frac{x}{m} \big\rfloor$ (since $\big\lfloor\frac{x}{m} \big\rfloor$ is an integer $n$ satisfying $n\leq \big\lfloor \frac{x}{m}\big\rfloor<n+1$), and thus obtain $\big\lfloor  \frac{\lfloor x \rfloor}{m}   \big\rfloor =\big\lfloor \frac{x}{m}\big\rfloor $.Proposition $  \ref{prop2} $ is proven .
\end{proof}

\begin{proposition}\label{pro3} 
Let $n$ a positive integer and $p$ a number prime, put $f(n)=\big\lfloor\frac{n}{p}\big\rfloor$ , Then we have :\\
\begin{equation}
\forall k\in\mathbb{N^*}\;\;,\;\;\underbrace{fo \cdots of}_{k\;\; times}(n)=\bigg\lfloor\frac{n}{p^k}\bigg\rfloor 
\end{equation}  
\end{proposition}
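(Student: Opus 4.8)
The plan is to proceed by induction on $k$, using Proposition~\ref{prop2} as the engine of the inductive step. Write $f^{(k)}$ for the $k$-fold composition $\underbrace{f\circ\cdots\circ f}_{k\text{ times}}$, so that the claim reads $f^{(k)}(n)=\lfloor n/p^k\rfloor$ for every $k\in\mathbb{N}^*$. For the base case $k=1$, the equality $f^{(1)}(n)=f(n)=\lfloor n/p\rfloor=\lfloor n/p^1\rfloor$ holds by the very definition of $f$, so there is nothing to check there.

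For the inductive step I would assume that $f^{(k)}(n)=\lfloor n/p^k\rfloor$ holds for some fixed $k$, and aim to deduce $f^{(k+1)}(n)=\lfloor n/p^{k+1}\rfloor$. By the definition of composition, $f^{(k+1)}(n)=f\big(f^{(k)}(n)\big)$; substituting the induction hypothesis gives $f^{(k+1)}(n)=f\big(\lfloor n/p^k\rfloor\big)=\big\lfloor \lfloor n/p^k\rfloor / p\big\rfloor$. The crucial move is then to apply Proposition~\ref{prop2} with the real number $x=n/p^k$ and the integer $m=p$: it yields $\big\lfloor \lfloor n/p^k\rfloor / p\big\rfloor=\big\lfloor (n/p^k)/p\big\rfloor=\lfloor n/p^{k+1}\rfloor$, which is exactly the desired identity, closing the induction.

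The only point requiring care --- and the nearest thing to an obstacle in an otherwise routine argument --- is the correct instantiation of Proposition~\ref{prop2}: one must recognize the inner integer $\lfloor n/p^k\rfloor$ as $\lfloor x\rfloor$ for the real argument $x=n/p^k$, and note that $p$ being prime guarantees $p\in\mathbb{N}^*$ so that the proposition applies. Once the two nested floors are collapsed into the single floor $\lfloor n/p^{k+1}\rfloor$, the result follows for all $k\in\mathbb{N}^*$.
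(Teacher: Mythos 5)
Your proof is correct and follows essentially the same route as the paper: induction on $k$ with the base case $k=1$ immediate from the definition of $f$, and the inductive step collapsing the nested floors via Proposition~\ref{prop2}. If anything, your instantiation of Proposition~\ref{prop2} with the real number $x=n/p^{k}$ and $m=p$ is stated more carefully than in the paper, which loosely writes the substitution as $x=\lfloor n/p^{k}\rfloor$.
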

\begin{proof}
we prove by induction on k that , then When $k = 1$, it is clear that $f(n)=\big\lfloor\frac{n}{p}\big\rfloor$.\\
Assume that \;\;
$\underbrace{fo \cdots of}_{k\;\; times}(n)=\bigg\lfloor\frac{n}{p^k}\bigg\rfloor$. Then we have :
\begin{align*}
\underbrace{fo \cdots of}_{k+1\; times}(n)=
fo\underbrace{fo \cdots of}_{k\; times}(n)
=f\bigg( \frac{\big\lfloor\frac{n}{p^k}\big\rfloor}{p}  \bigg)
\end{align*}
We let $x=\big\lfloor\frac{n}{p^k}\big\rfloor $ to substitute into proposition $\ \ref{prop2} $, then We will find : 
\begin{equation*}
\underbrace{fo \cdots of}_{k+1\; times}(n)=
\bigg\lfloor\frac{n}{p^{k+1}}\bigg\rfloor 
\end{equation*}
\end{proof}

Now after we give some property useful in this paper ,then , let n be a positive integer ,and  put :$\Psi(n)=\sum \limits_{k=1}^n \phi(k)$\;, So for every prime number  $p$ we put : 
\[
	\Upsilon(n,p)=\sum \limits_{\underset{(k,p)=1}{k=1}}^n \phi(k)\;\;\;and \;\;\; \Delta(n,p)=\sum \limits_{\underset{p|k}{k=1}}^n \phi(k)
\]
 Then we have \;:
\begin{equation}\label{equa1}
\Psi(n)=\Upsilon(n,p)+\Delta(n,p)
\end{equation}

\begin{theorem} \label{thm3a}For every positive integer $n\geq2$, and a prime number $p$ ,  we have :\\
\begin{equation}
\Delta(n,p)=(p-1)\sum_{\alpha=1}^{\big\lfloor\log_p(n)\big\rfloor} \Psi\bigg(\bigg\lfloor\frac{n}{p^\alpha}\bigg\rfloor\bigg)
\end{equation}
\end{theorem}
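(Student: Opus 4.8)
The plan is to reduce the sum over multiples of $p$ to a recurrence relating $\Delta(n,p)$ to the smaller instance $\Delta(\lfloor n/p\rfloor, p)$, and then to unwind this recurrence using the nested-floor identity of Proposition \ref{pro3}. First I would reindex the sum by writing every multiple of $p$ not exceeding $n$ as $k = pj$ with $1 \le j \le \lfloor n/p\rfloor$, so that
\[
\Delta(n,p) = \sum_{j=1}^{\lfloor n/p\rfloor} \phi(pj).
\]
The next step is to split each term $\phi(pj)$ according to whether $p$ divides $j$. By Proposition \ref{prop1}, $\phi(pj) = (p-1)\phi(j)$ when $\gcd(p,j)=1$, and $\phi(pj) = p\,\phi(j) = (p-1)\phi(j) + \phi(j)$ when $p \mid j$; in both cases one can write $\phi(pj) = (p-1)\phi(j) + \phi(j)\,[p\mid j]$. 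Summing over $j$ therefore yields the key recurrence
\[
\Delta(n,p) = (p-1)\,\Psi\!\left(\left\lfloor \tfrac{n}{p}\right\rfloor\right) + \Delta\!\left(\left\lfloor \tfrac{n}{p}\right\rfloor, p\right),
\]
since the terms with $p\mid j$ reassemble into $\Delta(\lfloor n/p\rfloor, p)$ while the remaining terms give $(p-1)\Psi(\lfloor n/p\rfloor)$.

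With the recurrence in hand, I would iterate it. Setting $n_0 = n$ and $n_{\alpha} = \lfloor n_{\alpha-1}/p\rfloor$, Proposition \ref{pro3} guarantees that $n_\alpha = \lfloor n/p^\alpha\rfloor$, so each application of the recurrence peels off one summand $(p-1)\Psi(\lfloor n/p^\alpha\rfloor)$ and replaces $\Delta(n_{\alpha-1},p)$ by $\Delta(n_\alpha, p)$. Telescoping (or a short induction on the number of iterations) then gives, for every $K$,
\[
\Delta(n,p) = (p-1)\sum_{\alpha=1}^{K} \Psi\!\left(\left\lfloor \tfrac{n}{p^\alpha}\right\rfloor\right) + \Delta\!\left(\left\lfloor \tfrac{n}{p^K}\right\rfloor, p\right).
\]

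The final and most delicate point is to identify the stopping index and to verify that the remainder term vanishes there. The recursion must terminate once $\lfloor n/p^\alpha\rfloor < p$, because then the range of summation of $\Delta(\lfloor n/p^\alpha\rfloor, p)$ contains no multiple of $p$ and the sum is empty. Taking $K = M := \lfloor \log_p n\rfloor$, I would invoke Corollary \ref{cor1} to pass cleanly between the integer comparison $\lfloor n/p^\alpha\rfloor \ge p$ and the inequality $n \ge p^{\alpha+1}$: since $p^M \le n < p^{M+1}$, the value $\lfloor n/p^M\rfloor$ satisfies $1 \le \lfloor n/p^M\rfloor < p$, whence $\Delta(\lfloor n/p^M\rfloor, p) = 0$. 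Substituting $K = M$ then leaves exactly
\[
\Delta(n,p) = (p-1)\sum_{\alpha=1}^{\lfloor \log_p n\rfloor} \Psi\!\left(\left\lfloor \tfrac{n}{p^\alpha}\right\rfloor\right),
\]
which is the claim. I expect the main obstacle to be this termination bookkeeping—confirming that the last contributing index is precisely $\lfloor \log_p n\rfloor$ and that the leftover $\Delta$-term is genuinely zero—rather than the algebra of the recurrence itself, which follows immediately from Proposition \ref{prop1}.
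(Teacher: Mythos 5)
Your proposal is correct and follows essentially the same route as the paper: reindex the multiples of $p$ as $k=pj$, apply Proposition \ref{prop1} to obtain the recurrence $\Delta(n,p)=(p-1)\Psi(\lfloor n/p\rfloor)+\Delta(\lfloor n/p\rfloor,p)$, and unwind it via Proposition \ref{pro3}. If anything, your treatment of the stopping index (showing $1\le\lfloor n/p^{M}\rfloor<p$ for $M=\lfloor\log_p n\rfloor$, so the leftover $\Delta$-term vanishes) is more careful than the paper's, which only says the iteration stops once $\lfloor n/p^{\alpha}\rfloor\le 1$.
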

\begin{proof} we have :
\begin{align*} 
\sum \limits_{\underset{p|k}{k=1}}^n \phi(k)
& =\phi(p) + \phi(2p) + \cdots + \phi(jp) \;\;,\;with\;\; j\leq \big\lfloor\frac{n}{p}\big\rfloor\\
& =\sum_{j=1}^{\big\lfloor\frac{n}{p}\big\rfloor} \phi(jp) = \sum \limits_{\underset{p|j}{j=1}}^{\big\lfloor\frac{n}{p}\big\rfloor} \phi(jp)+\sum \limits_{\underset{(p,j)=1}{j=1}}^{\big\lfloor\frac{n}{p}\big\rfloor} \phi(jp) 
\end{align*}

From the proposition $\ref{prop1}$ , we have $\phi(jp)=(p-1)\phi(j)$ if $(j,p)=1$ and $\phi(jp)=p\phi(j)$ if $(j,p)>1$ then  : 
\begin{align*} 
\sum \limits_{\underset{p|k}{k=1}}^n \phi(k)
& = p\sum \limits_{\underset{p|j}{j=1}}^{\big\lfloor\frac{n}{p}\big\rfloor} \phi(j)+(p-1)\sum\limits_{\underset{(p,j)=1}{j=1}}^{\big\lfloor\frac{n}{p}\big\rfloor} \phi(j) \\
& =p\Delta\big(\big\lfloor\frac{n}{p}\big\rfloor,p\big)+(p-1)\Upsilon\big(\big\lfloor\frac{n}{p}\big\rfloor,p\big)\\
& =p\Delta\big(\big\lfloor\frac{n}{p}\big\rfloor,p\big)+p\Upsilon\big(\big\lfloor\frac{n}{p}\big\rfloor,p\big)-\Upsilon\big(\big\lfloor\frac{n}{p}\big\rfloor,p\big)\\
&=p\bigg( \underbrace{\Delta\big(\big\lfloor\frac{n}{p}\big\rfloor,p\big)+\Upsilon\big(\big\lfloor\frac{n}{p}\big\rfloor,p\big)}_{=\Psi\big(\big\lfloor\frac{n}{p}\big\rfloor\big) \;\; by\;\;\ref{equa1}} \bigg)-\Upsilon(\big\lfloor\frac{n}{p}\big\rfloor,p\big)\\
&=p\Psi\big(\big\lfloor\frac{n}{p}\big\rfloor\big)-\Upsilon\big(\big\lfloor\frac{n}{p}\big\rfloor,p\big)
\end{align*}

Since 
$\Upsilon\big(\big\lfloor\frac{n}{p}\big\rfloor,p\big)=
\Psi\big(\big\lfloor\frac{n}{p}\big\rfloor\big) -
\Delta\big(\big\lfloor\frac{n}{p}\big\rfloor,p\big)
 $  by equality $ \ref{equa1} $ , Then  :
\begin{align*} 
\Delta(n,p)&=p\Psi\big(\big\lfloor\frac{n}{p}\big\rfloor\big)-\bigg(\Psi\big(\big\lfloor\frac{n}{p}\big\rfloor\big)-\Delta\big(\big\lfloor\frac{n}{p}\big\rfloor,p\big)\bigg)
\end{align*}
so that : 
\begin{equation}\label{equa4}
\Delta(n,p)=(p-1)\Psi\big(\big\lfloor\frac{n}{p}\big\rfloor\big)+\Delta\big(\big\lfloor\frac{n}{p}\big\rfloor,p\big)
\end{equation} 

In the equality $ \ref{equa4} $,  Substituting $n$ by $\big\lfloor\frac{n}{p} \big\rfloor $ , and by using the proposition $ \ref{pro3}  $ we have  :
\begin{equation}
\Delta\big(\big\lfloor\frac{n}{p}\big\rfloor,p\big)=(p-1)\Psi\bigg(\big\lfloor\frac{n}{p^2}\big\rfloor\bigg)+\Delta\bigg(\big\lfloor\frac{n}{p^2}\big\rfloor,p\bigg)
\end{equation}
Now , Substituting this  $\Delta\big(\big\lfloor\frac{n}{p}\big\rfloor,p\big)$ into $4$ we find that:
\begin{equation}
\Delta(n,p)=(p-1)\Psi\big(\big\lfloor\frac{n}{p}\big\rfloor\big)+(p-1)\Psi\big(\big\lfloor\frac{n}{p^2}\big\rfloor\big)+\Delta\big(\big\lfloor\frac{n}{p^2}\big\rfloor,p\big)
\end{equation}
In the same way if we repeat this operation up to an integer $\alpha$ such that $\big\lfloor\frac{n}{p^{\alpha}}\big\rfloor\leq 1$ finds this equality :
\begin{align*}
\Delta(n)
& =(p-1)\Psi\big(\big\lfloor\frac{n}{p}\big\rfloor\big)+(p-1)\Psi\big(\big\lfloor\frac{n}{p^2}\big\rfloor\big)+(p-1)\Psi\big(\big\lfloor\frac{n}{p^3}\big\rfloor\big)+ \cdots +
\Psi\big(\big\lfloor\frac{n}{p^\alpha}\big\rfloor\big)\\
&=(p-1)\sum_{\alpha=1}^{\big\lfloor\log_p(n)\big\rfloor} \Psi\big(\big\lfloor\frac{n}{p^\alpha}\big\rfloor\bigg)
\end{align*}

\end{proof}

\begin{remark} For every prime number $p$ we have : 
$$\frac{\Delta(pn,p)}{p-1}=\Psi(n)+\Delta(n,p).$$
So for $p=2$ , we have :
\begin{equation*}
\Delta(2n,2)=\Psi(n)+\Delta(n,2).
\end{equation*}
\end{remark}

\bibliographystyle{unsrt}  


\end{document}